\newtheorem{theorem}{Theorem}[section]
\newtheorem{lemma}[theorem]{Lemma}
\newtheorem*{varthmC}{Conjecture}
\newtheorem*{varthm1}{Main Theorem}
\newtheorem{Def}[theorem]{Definition}
\newtheorem*{KC}{Korselt's Criterion}
\begin{document}
\title[Carmichael Numbers in Arithmetic Progressions]{Infinitely Many Carmichael Numbers in Arithmetic Progressions}
\author{Thomas Wright}
\maketitle

\begin{abstract}
In this paper, we prove that for any $a,M\in \mathbb N$ with $(a,M)=1$, there are infinitely many Carmichael numbers $m$ such that $m\equiv a$ mod $M$.\end{abstract}

\section{Introduction}
Fermat's Little Theorem asserts that if $m$ is prime then
\[a^m\equiv a\pmod m.\]
In 1910, investigations by R.D. Carmichael [Ca] into whether the converse of Fermat's Little Theorem is true unearthed the following objects:

\begin{Def} Let $m \in \mathbb N$.  If $m|a^m-a$
for every $a \in \mathbb Z$ and $m$ is not prime then $m$ is a
\underline{Carmichael Number}.\end{Def}

Although Carmichael numbers would seem at first glance to be extremely rare (there are only seven Carmichael numbers less than 10,000), it was nevertheless proven by Alford, Granville, and Pomerance in 1994 that there are infinitely many [AGP]; in fact, subsequent results have proven a lower bound of roughly $x^{1/3}$ for the density of this set [Ha2], and it has been conjectured that the density is actually $x^{1-o(1)}$ (see [Er], [GP]).

After the proof of the infinitude of Carmichael numbers in [AGP], focus began to turn to natural follow-up questions, including the occurrence of Carmichael numbers in arithmetic progressions.  To this end, the following conjecture was made:

\begin{varthmC}  Let $M$ be a positive integer, and let $(a,M)=1$.  Then there are infinitely many Carmichael numbers $m$ such that

\[m\equiv a\pmod M.\]
\end{varthmC}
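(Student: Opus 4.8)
The plan is to follow the strategy of Alford, Granville, and Pomerance [AGP] for producing Carmichael numbers, and to graft onto it enough control of the product modulo $M$ to land in the prescribed residue class $a$. The engine is Korselt's Criterion: if $n=\prod_{p\in S}p$ is a squarefree product of distinct primes, each satisfying $(p-1)\mid L$, and if $n\equiv 1\pmod L$, then every $p\mid n$ has $(p-1)\mid L\mid(n-1)$, so $n$ is a Carmichael number (it is automatically composite with at least three prime factors, since no product of one or two distinct primes can satisfy the criterion: for distinct $p,q$ one checks that $p-1\mid pq-1$ forces $p-1\mid q-1$, and symmetrically, whence $p=q$). Thus it suffices, for infinitely many choices of a modulus $L$, to exhibit a subset $S$ of a suitable pool of primes with $\prod_{p\in S}p\equiv 1\pmod L$ and simultaneously $\prod_{p\in S}p\equiv a\pmod M$.

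Assume for simplicity that $L$ is coprime to $M$, which I am free to arrange. By the Chinese Remainder Theorem the two congruences above amount to choosing a subset of primes whose product equals the single target element $(1,a)$ in the abelian group $G=(\mathbb Z/L\mathbb Z)^{*}\times(\mathbb Z/M\mathbb Z)^{*}$. Here I would invoke the combinatorial input used in [AGP] (the theorem of van Emde Boas and Kruyswijk on subset-product subsequences): if $g_1,\dots,g_r$ are elements of a finite abelian group $G$ that generate $G$ and $r$ exceeds a bound of size roughly $|G|\log|G|$, then every element of $G$ is the product of some subsequence --- in fact of $\gg 2^{r}/|G|$ of them. Applying this to the images in $G$ of a large pool $\mathcal P$ of primes $p$ with $(p-1)\mid L$ would produce the desired subset $S$, hence a Carmichael number $\equiv a\pmod M$; letting $L\to\infty$ then yields infinitely many distinct such numbers.

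Two things must be secured to run this. First, one needs the pool $\mathcal P=\{p:\ (p-1)\mid L,\ p\nmid LM\}$ to be large compared with $|G|=\varphi(L)\varphi(M)$; this is precisely the delicate existence statement at the heart of [AGP], where a highly composite $L$ is built so that, via a Bombieri--Vinogradov--type count of primes $p\equiv 1\pmod d$ over many divisors $d\mid L$, the set $\mathcal P$ is guaranteed to be enormous. Second --- and this is the genuinely new requirement --- the images of $\mathcal P$ must generate $(\mathbb Z/M\mathbb Z)^{*}$, so that the target residue $a$ is actually reachable. I would arrange this by carrying the modulus $M$ through the whole construction: instead of merely counting primes $p\equiv 1\pmod d$ for $d\mid L$, I would count primes in the joint progression $p\equiv 1\pmod d$, $p\equiv c\pmod M$ for each unit $c$ modulo $M$, i.e. primes in a single class modulo $dM$. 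Because $M$ is fixed while $x\to\infty$, the factor $\varphi(M)$ is a harmless constant, and the same Bombieri--Vinogradov input (now to moduli $dM$) should supply $\gg \pi(x)/(\varphi(d)\varphi(M))$ primes in each class $c$, so that $\mathcal P$ meets every unit class modulo $M$ and its images surject onto $(\mathbb Z/M\mathbb Z)^{*}$.

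The main obstacle is exactly this merging of the two demands: the proof in [AGP] that a good $L$ exists is a careful optimization in which the number of admissible primes is only barely large enough, and one must check that imposing the extra congruence $p\equiv c\pmod M$ does not erode those lower bounds below the threshold needed for the subset-product theorem. Concretely, the hard step is to re-prove the key prime-counting estimate of [AGP] with the modulus $dM$ in place of $d$, uniformly over the relevant divisors $d$, and to verify that the resulting pool remains both large (for reachability of the component $1$ modulo $L$) and equidistributed across the classes modulo $M$ (for reachability of $a$). Once this combined estimate is in hand, the group-theoretic and Korselt steps are essentially formal, and infinitude follows by sending the construction parameter to infinity.
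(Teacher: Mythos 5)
Your framework for producing the pool of primes is broadly the right one, but the combinatorial step on which the whole argument pivots is not: the statement you attribute to van Emde Boas and Kruyswijk --- that if $g_1,\dots,g_r$ generate a finite abelian group $G$ and $r$ exceeds roughly $|G|\log|G|$, then \emph{every} element of $G$ is a subsequence product --- is false. In $G=\mathbb{Z}/9\mathbb{Z}$ (written additively), take one copy of $1$ and arbitrarily many copies of $3$: this sequence generates $G$, yet its subsequence sums are contained in $\{0,3,6\}\cup\{1,4,7\}$ and never equal $2$. What [EK] actually bounds is the length needed to guarantee a subsequence whose product is the \emph{identity}; that is why the [AGP] argument yields Carmichael numbers $\equiv 1\pmod M$ but not $\equiv a\pmod M$, and reaching the non-identity target $(1\bmod L,\ a\bmod M)$ is precisely the content of the conjecture. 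Neither ``$\mathcal P$ is large'' nor ``the images of $\mathcal P$ generate, or even equidistribute in, $(\mathbb Z/M\mathbb Z)^{\times}$'' suffices; if it did, the results of [BP] and [Ma] would not have been conditional or partial.

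The paper closes exactly this gap with an idea absent from your proposal. It invokes a theorem of Baker and Schmidt (via [Ma]): a sufficiently long sequence $A$ in $G$ admits a nontrivial subgroup $H\leq G$ such that every $h\in H$ is the product of a subsequence of $A\cap H$. Since $H$ is not under one's control, the target must be forced into $H$. To do this the paper builds $L$ from primes $q\equiv -1\pmod{4\phi(M)}$ and requires every $p\in\mathcal P$ to be a quadratic residue mod $L$ and to lie in the single class $a$ mod $M$ (not spread over all classes). Then for any $p_H\in\mathcal P\cap H$, the order of $p_H$ modulo $L$ divides $\prod_{q\mid L}\frac{q-1}{2}$, which is odd and coprime to $\phi(M)$; choosing $r\equiv 0$ modulo that order and $r\equiv 1\pmod{\phi(M)}$ gives $p_H^{\,r}\in H$ with $p_H^{\,r}\equiv 1\pmod L$ and $p_H^{\,r}\equiv a\pmod M$, so the target lies in $H$ and is reachable. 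Your analytic steps (Bombieri--Vinogradov over moduli $dM$, a single good $k$, Korselt's criterion) do track the paper, but they must be carried out with these extra congruence conditions --- which forces the level up to $ML$ and costs a factor $2^{\omega(L)}$ in the prime counts --- and without some device of this kind the final subset-product step fails.
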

%The source of this conjecture is unclear, as it is a natural

%Up until now, the only unconditional results have been those of

Although this conjecture has been open for some time, it has not been without progress.  For instance, the original proof of infinitely many Carmichael numbers is easily altered to prove the case where $m\equiv 1$ (mod $M$).  Banks and Pomerance [BP] gave a conditional proof of the above conjecture in 2010, although their proof required the assumption of a very strong (stronger than GRH) conjecture on the size of the first prime in an arithmetic progression.  In 2012, Matom\"{a}ki [Ma] gave an unconditional proof for the special case where $a$ is a quadratic residue mod $M$.  However, the general case remained open; in fact, for a given $M$, it has not previously been known whether there are infinitely many Carmichael numbers that are not quadratic residues modulo $M$.

In this paper, however, we resolve this conjecture completely.  In particular, we have the following:

%In 2010, Banks and Pomerance [BP] gave a conditional proof of this conjecture, proving the above under the very strong (stronger than GRH) assumption on the size of the first prime in an arithmetic sequence.

%M\"{a}tamaki [Ma], who, in 2012, proved the special case where $a$ is a quadratic residue modulo $M$.  In this paper, however, we solve the general case of $(a,M)=1$ and completely resolve this conjecture.  Specifically, we prove the following:

\begin{varthm1}  Let $M$ be a positive integer, and let $(a,M)=1$.  Then there are infinitely many Carmichael numbers $m$ such that

\[m\equiv a\pmod M.\]
Specifically, if we let $C_{M,a}(X)$ denote the number of Carmichael numbers up to $X$ that are congruent to $a$ mod $M$, there exists a constant $K>0$ for which

\[C_{M,a}(X)\gg X^{\frac{K}{(\log\log\log X)^2}}.\]
\end{varthm1}
This constant $K$ is explicitly computable (although it depends on $M$).  We do not engage in the computation of $K$ here for two reasons: doing so would make the paper somewhat less transparent, and it seems likely that our bound is well short of the actual lower bound for this quantity.

As is the case with nearly every paper written about the infinitude of Carmichael numbers since 1994, the proof in the current paper generally follows the outline of [AGP].  In particular, we begin by creating an $L$ with many factors $d$; we then show that for these $d$, there are many primes of the form $dk+1$ for $k$ relatively prime to $L$.  After finding a single $k$ which yields sufficiently many of these types of primes, we show that some subset of our primes can be combined to form Carmichael numbers.  Matom\"{a}ki [Ma] was the first to demonstrate how one can use this framework to prove properties about Carmichael numbers in arithmetic progressions; the present paper capitalizes on the alterations to the traditional framework that were introduced in that paper, using these new methods and adapting them to prove our results.

%First, we increase the size of $x$ and, consequently, $x^B$, the bound up to which we consider primes as possible factors of Carmichael numbers.

%Second, we depart from the usual requirement that $L$ be square-free; instead, we require that $L$ be a perfect square.  This movement away from the requirement of square-free $L$'s is probably a necessary step in the development of these techniques; after all, [BH] recently proved that there are infinitely many Carmichael numbers $m$ for which $\phi(M)$ is a square, and the purpose of constructing $L$ is to create a number from which $\phi(m)$ will be generated for many Carmichael numbers $m$.

In the current paper, however, we introduce one major change from other methods: we make additional requirements on our primes $p$ in relation to $L$.  In particular, we require that each of our primes $p$ be a quadratic residue mod $L$.  Of course, the $p$ will generally be of the form $p=dk+1$ for some $d|L$, so this requirement of $p$ being a quadratic residue mod $L$ does not run afoul of the other requirements on $p$.

Alford, Granville, and Pomerance [AGP] make use of the fact that, apart from cases of exceptional moduli, the number of primes less than $x$ in a congruence class modulo $c$ can be estimated using classical estimates as long as $c\leq x^{1/B}$.  Unfortunately, since our primes $p$ must now fulfil requirements mod $L$, we require that $x$ be larger than $(ML')^{2/B}$ (for an $L'$ slightly larger than $L$), which is much a greater $x$ than is required in the original paper.  It should be noted that there is actually one benefit to this increased $x$.  Specifically, at a key point in the original proof of [AGP], there is sum over all the divisors $d$ of $L$ with $d\leq x$; in our paper, this is simply the sum of all divisors of $L$, which means that we can give an exact evaluation, rather than an estimate, for this quantity.

%As a result of this new condition, we are obviously restricted in the permissible congruence classes mod $L$.  To compensate for this, we increase the size of $x$, the upper bound for the prime factors of our newly constructed Carmichael numbers, relative to $L$; in fact, we use a value of $x$ that is significantly larger than $L$.  There are two advantages to this change.  First, we have more primes with which to work, which obviously becomes useful toward the end of the paper when we use combinatorics to combine our primes into Carmichael numbers.  Second, at a key point in the original proof of [AGP], there is sum over all the divisors $d$ of $L$ with $d\leq x$; in our paper, this is simply the sum of all divisors of $L$, which means that we can have a lossless estimate of this quantity.

%requires the evaluation of a number and sums of diApart from the obvious benefit of giving us more primes to work with, there is another benefit to using this higher $x$; at a key point in the paper, we replace a .  There is actually a benefit to this (apart from the fact that it allows us to work with a larger collection of primes); $X$ is now large enough that our sums over the number of divisors of $L$ is no longer a truncated sum.

Unfortunately, the downside of using the larger $x$ is that we end up with a rather low asymptotic for the number of Carmichael numbers up to $X$; [Ma] shows that in the case where $a$ is a quadratic residue modulo $M$, the density is $\gg x^{1/5}$, and there does not seem to be any heuristic justification that a similar lower bound should not apply here.  In fact, it is entirely possible that one could find a lower bound $\gg x^{1-o(1)}$ (as has been conjectured for the number of Carmichael numbers themselves), but this sort of result is probably a long way off.

The primary change that we introduce (requiring that all of the primes be quadratic residues mod $L$) may be subtle, but it seems likely that this new technique can help with other problems related to Carmichael numbers.  For example, an open problem mentioned in [AGP] is to determine whether, for fixed integers $a$ and $b$, there are infinitely many natural numbers $n$ for which $p|n$ implies $p-a|n-b$.  This problem relates to Carmichael numbers via Korselt's Criterion, which states the following:
\begin{KC}  A natural number $n$ is a Carmichael number if and only if $n$ is square-free and for every prime $p$ that divides $n$, $p-1|n-1$.\end{KC}
As such, it is obvious that [AGP] resolves the case when $a=b=1$.  The generalization to other $a$ and $b$, however, is still an open problem, and this problem has been described as having ``significance for variants of pseudoprime tests."  The current paper can be easily altered to resolve this problem in the affirmative in the additional case of $a=b=-1$.

One could also use our method to show that for any $a,M\in \mathbb N$, there are infinitely many Carmichael numbers $m$ such that the number of prime factors of $m$ is congruent to $a$ (mod $M$).

We take these and other issues up in future papers (see [Wr2], [Wr3]).

%It is known by Korselt's Criterion that a Carmichael number $m$ requires that for any prime $p|m$, $p-1$ must divide $m-1$.  It is an open question [AGP] as to whether one can find infinitely many numbers $m$ for which $p|m$ implies $p-1|m-a$ for some fixed number $a$ that is not zero or one.  This

\section{Primes in Arithmetic Progressions}
%Let us suppose that $y\leq \log\log\log x$.  We know that for any integer $d\leq y$, the Siegel-Walfisz asserts that for any $a$ with $(a,d)=1$,

%$$\pi(x,p,a)>\frac{x}{2\phi(d)\log x}.$$

%In our construction, we let $\theta_y$ be a large even number such that for $y$ as above, $y^{\theta_y}\geq x$.  Our goal will be to prove that there are infinitely many Carmichael numbers that are congruent to $a$ mod $m$ for any $a$ that is relatively prime to $m$.
Our first step will be to identify the primes that we will use to construct the $L$ mentioned above.  Let $1<\theta<2$, and let $P(q-1)$ be the size of the largest prime divisor of $q-1$.  We then define the set $\mathcal Q$ by
\[\mathcal Q=\{q\mbox{ }prime:\frac{y^\theta}{\log y}\leq q\leq y^{\theta},\mbox{ }q\nmid M,\mbox{ }q\equiv -1\pmod{4\phi(M)},\mbox{ }P(q-1)\leq y\}.\]

We will require that $|\mathcal Q|$ be reasonably close to $\pi(y^\theta)$.  To this end, we prove the following:

\begin{lemma}\label{lemtwo}
For $\mathcal Q$ as above, there exist constants $\gamma=\gamma(\theta,M)$ and $Y_{\theta,\phi(M)}$ such that

\[|\mathcal Q|\geq \gamma \frac{y^{\theta}}{\log (y^\theta)}\]
if $y>Y_{\theta,\phi(M)}$
\end{lemma}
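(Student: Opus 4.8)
The plan is to realise $|\mathcal Q|$ as a count of primes in the fixed residue class $-1 \pmod{4\phi(M)}$ lying in the window $[y^\theta/\log y,\ y^\theta]$, from which we remove those whose shift $q-1$ has a prime factor exceeding $y$. Writing $D=4\phi(M)$, the main (total) term is controlled by the prime number theorem for the fixed modulus $D$: since $\gcd(-1,D)=1$, Dirichlet's theorem together with the classical zero-free region gives
\[
\#\{q\le y^\theta:\ q\equiv -1 \!\!\pmod{D}\}=\big(1+o(1)\big)\frac{1}{\phi(D)}\,\frac{y^\theta}{\log y^\theta}.
\]
The lower cutoff $y^\theta/\log y$ removes only an $O(1/\log y)$ proportion of these primes, and the side condition $q\nmid M$ discards at most the finitely many prime divisors of $M$; both are negligible, so the window contains $\big(1+o(1)\big)\frac{1}{\phi(D)}\frac{y^\theta}{\log y^\theta}$ primes of the progression.

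The substance of the lemma is the smoothness condition $P(q-1)\le y$, and I would handle it by bounding its complement. The key structural point is that, because $\theta<2$, any prime $q\le y^\theta$ with $P(q-1)>y$ satisfies $q-1=Pm$ for a \emph{unique} prime $P>y$ and a cofactor $m=(q-1)/P<y^{\theta-1}<y$ (a second prime factor exceeding $y$ would force $q-1>y^2>y^\theta$); moreover $m$ is even, since $P$ and $q$ are both odd. Thus the number of these ``bad'' primes is at most
\[
\sum_{\substack{m<y^{\theta-1}\\ m\ \mathrm{even}}}\#\{P\le y^\theta/m:\ P\ \text{prime},\ mP+1\ \text{prime},\ mP+1\equiv -1\!\!\pmod{D}\}.
\]
For each fixed $m$ I would apply a standard upper-bound sieve (Selberg's sieve or Brun's method) for the simultaneous primality of $P$ and $mP+1$, obtaining a bound of shape $\ll \frac{1}{\phi(D)}\,\frac{m}{\phi(m)}\,\frac{y^\theta/m}{(\log(y^\theta/m))^2}$. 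It is essential that the sieve yields \emph{two} powers of $\log$, one from each prime; a bound using the primality of $q$ alone would lose the saving entirely. Since $\log(y^\theta/m)\ge\log y$ throughout the range, summing over $m$ with $\sum_{m\le T}\frac1{\phi(m)}\ll\log T$ produces a bad count of size $\ll (\theta-1)\,\frac{y^\theta}{\log y^\theta}$.

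Subtracting, I would reach $|\mathcal Q|\ge\big(c_1-c_2(\theta-1)\big)\frac{y^\theta}{\log y^\theta}$ with $c_1=c_1(M)>0$ and $c_2=c_2(M)$, and the lemma follows once the bad proportion is shown to be strictly below the main term. This is exactly where I expect the main obstacle to lie: the simultaneous-primality sieve forfeits a constant factor, so the crude estimate above yields a positive $\gamma$ only for $\theta$ bounded away from $2$. To cover the full interval $1<\theta<2$ one must sharpen the bad count — evaluating $\sum_m \frac{1}{\phi(m)(\log(y^\theta/m))^2}$ with the true value of $\log(y^\theta/m)$ rather than the crude lower bound $\log y$ (which replaces the crude factor $\theta(\theta-1)$ by one of order $\theta-1$), and exploiting both the restriction to even $m$ and the precise singular series. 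The heuristic target is that the bad primes form a proportion $\log\theta<\log 2<1$ of all primes in the progression, so a positive $\gamma=\gamma(\theta,M)$ must exist for every admissible $\theta$; turning this heuristic into a rigorous inequality by keeping the sieve constant under control across the whole interval is the crux. The remaining ingredients — checking that $q\equiv-1\pmod D$ and $q\equiv1\pmod m$ are jointly sievable in the class modulo $\operatorname{lcm}(D,m)$, and collecting the error terms — are routine.
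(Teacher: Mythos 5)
Your strategy is genuinely different from the paper's --- you estimate the full count of primes $q\equiv -1\pmod{4\phi(M)}$ and subtract an upper bound for the complement $P(q-1)>y$ via a two-dimensional sieve, whereas the paper constructs the primes of $\mathcal Q$ directly from below --- and the difference is not cosmetic: the step you yourself flag as ``the crux'' is a genuine gap that no amount of care with the singular series or the evenness of $m$ will close. Writing the bad count as $\sum_{m}\#\{P:\ P,\ mP+1\ \text{both prime}\}$ and evaluating the sum over $m$ with the correct logarithms, the \emph{heuristic} size of the complement is $(\log\theta+o(1))$ times the main term, which is indeed less than $1$ for all $\theta<2$; but every rigorous upper-bound sieve for the simultaneous primality of $P$ and $mP+1$ loses a multiplicative constant $\lambda$ over the heuristic, with $\lambda=4$ for the classical Brun/Selberg bound, roughly $3.4$ for the best current refinements, and $\lambda=2$ an established barrier (the parity problem). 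Your subtraction therefore yields a positive $\gamma$ only when $\lambda\log\theta<1$, i.e.\ $\theta<e^{1/\lambda}$, which is at best $\theta<e^{1/2}\approx 1.65$ under hypotheses nobody can prove, and $\theta$ below roughly $1.3$ with what is actually available. Since the lemma is asserted for every $\theta\in(1,2)$, the argument as proposed cannot prove it; the obstruction is structural, not a matter of ``keeping the sieve constant under control.''

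The paper (following Matom\"aki) avoids the complement entirely: it counts primes $q\le z$ with $q\equiv -1\pmod{4\phi(M)}$ and $q\equiv 1\pmod{q'}$ for some prime $q'\in[z^{1-\alpha},z^{1/2-\epsilon}]$ with $\alpha=\min\{1/\theta,3/5\}$; any such $q$ automatically satisfies $P(q-1)\le z^{\alpha}\le y$, each $q$ arises from at most two choices of $q'$, and the sum over $q'$ is handled by Bombieri--Vinogradov, which delivers a genuine \emph{lower} bound with no parity loss. That is precisely what makes the whole range $\theta<2$ (equivalently $\alpha>1/2$) accessible. Two mitigating remarks: your argument, if carried out for $\theta$ sufficiently close to $1$, would in fact suffice for the paper's downstream application, since only one fixed $\theta\in(1,2)$ is ever used; and your treatment of the main term and of the structural decomposition $q-1=Pm$ (uniqueness of $P$, $m$ even, $m<y^{\theta-1}$) is correct. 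But as a proof of the lemma as stated it does not go through, and the two-dimensional sieve details you defer as routine are exactly where the fatal constant enters.
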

\begin{proof}  This proof appears in [Ma]; we replicate it here.

For $v<z$, let us denote by $\pi_{d,b}(z,v)$ the number of primes $q$ less than $z$ such that $P(q-1)\leq v$ and $q\equiv b$ (mod $d$).  Let $\frac 12<\alpha<\frac 23$, and define $\epsilon=\epsilon(\alpha)<\alpha-\frac 12$.  Note that if $q\leq z$ is such that $q$ can be written as $q=1+q'k$ for some prime $q'\in [z^{1-\alpha},z^{\frac 12-\epsilon}]$ then $P(q-1)\leq z^\alpha$; each $q$ has at most two such representations.  So
\[ \pi_{d,b}(z,z^{\alpha})\geq \frac 12\sum_{q'\in \mathbb P,\mbox{ }z^{1-\alpha}\leq q'\leq z^{\frac 12-\epsilon}}\#\{q\mbox{ }prime,\mbox{ }\frac{z}{\log z}\leq q\leq z,\mbox{ }q\equiv 1\pmod{q'},\mbox{ }q\equiv b\pmod d\}.\]
Since $d$ is fixed and $q$ is sufficiently large relative to $d$ and $q'$, we can consolidate our requirements on $q$ to be a single congruence modulo $dq'$, and hence we may use Bombieri-Vinogradov to find that
\[ \pi_{d,b}(z,z^{\alpha})\geq \sum_{q'\in \mathbb P,\mbox{ }z^{1-\alpha}\leq q'\leq z^{\frac 12-\epsilon}}\frac{z}{8\phi(dq')\log z}\geq \log\left(\frac{\frac 12-\epsilon}{1-\alpha}\right)\frac{z}{8\phi(d)\log z}.\]
The lemma then follows by letting $d=4\phi(M)$, $b=-1$, $z=y^\theta$, $\alpha=min\{\frac{1}{\theta},\frac 35\}$, and $\gamma=\frac{1}{8\phi(d)}\log\left(\frac{\frac 12-\epsilon}{1-\alpha}\right)$.
\end{proof}

The next step is to construct an $L$ and prove that there are a large number of primes that satisfy a variety of congruence conditions modulo $L$ and $M$.  However, we must first deal with a technicality involving the densities of primes in specific arithmetic progressions.

In particular, fix $B$ such that $0<B<5/12$.  Theorem 2.1 of [AGP] says that for any $x$ there exists a set of integers $\mathcal D_B(x)$, where $|\mathcal D_B(x)|$ is bounded by some constant $D_B$ and every integer in $\mathcal D_B(x)$ is greater than $\log x$, such that if $d$ is not divisible by an element in $\mathcal D_B(x)$ and $d\leq \min\{x^B,z/x^{1-B}\}$ then
\[\pi(z,d,c)\geq \frac{\pi(z)}{2\phi(d)},\]
for any $c$ with $(c,d)=1$.  Since we will wish to use this density estimate, we must be careful that our moduli are not divisible by an element of $\mathcal D_B(x)$.

In the original proof of Alford, Granville, and Pomerance [AGP], the authors apply this density estimate to find a lower bound for the number of primes up to $x$ that are 1 modulo the various divisors $d$ of $L$.  For this, the authors can work around this issue of problematic moduli by simply ignoring those divisors $d$ that are divisible by elements of $\mathcal D_B(x)$ and then invoking the density theorem on the remaining divisors; since $|\mathcal D_B(x)|$ is bounded, this still leaves many $d$ from which one can find primes as required.  Unfortunately, the luxury of simply ignoring inconvenient $d$ is not afforded to us in the present case, as we will also be requiring that all of our primes be quadratic residues modulo $L$, which means that we must be certain that $L$ is not divisible by any element of $\mathcal D_B(x)$.  To ensure that no member of $\mathcal D_B(x)$ divides $L$, we will first create an $L'$ for the purposes of determining our $x$; after this, we will remove from our $L'$ any multiples of elements in $\mathcal D_B(x)$, leaving us with an $L$ for which our density theorem applies.

We begin first with our definition of $L'$.  Let
\[L'=\prod_{q\in \mathcal Q}q.\]
For fixed $B$ with $0<B<5/12$, we then choose $x$ to be
\[x=\lceil \left(ML'\right)^{\frac{2}{B}}\rceil.\]
Now, for $\mathcal D_B(x)$ as described above, we can choose a set of primes $P_{B}(x)$, where  $|P_{B}(x)|\leq D_B$, such that any element in $\mathcal D_B(x)$ is divisible by at least one of the primes in $P_B(x)$.  From this, let
\[L=\prod_{q\in \mathcal Q,\mbox{ }q\not\in P_{B}(x)}q.\]
By this definition, no factor of $L$ is divisible by an element in $\mathcal D_B(x)$.  Moreover, since every element of $\mathcal D_B(x)$ is of size at least $\log x$, the assumption that $y$ is sufficiently large relative to $M$ implies that no divisor of $M$ is in $\mathcal D_B(x)$.

As mentioned above, we wish to show that there are a large number of primes less than $x$ that are 1 modulo divisors $d$ of $L$ while also being quadratic residues mod $L$ and congruent to $a$ modulo $M$.  It is worth noting that $(M,L)=1$ (since we may assume that $y$ is large relative to $M$), which ensures that these requirements on congruence classes do not contradict one another.

By analogy with the convention for denoting primes in arithmetic progressions, let us use the notation $\pi(z,d,\underline{QR})$ to indicate the number of primes up to $z$ that are quadratic residues mod $d$. Then we can prove the following:
\begin{lemma}  For any $a$ with $(a,M)=1$ and any $z\geq x^{1-\frac B2}$,
\[\pi(z,L,\underline{QR})\cap \pi(z,M,a)\geq \frac{z}{2^{\omega(L)+1}\phi(M)\log z},\]
where $\omega(L)$ denotes the number of prime factors of $L$.
\end{lemma}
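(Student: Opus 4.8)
The plan is to translate the two conditions defining the count into a union of ordinary arithmetic progressions modulo $ML$, and then apply the density estimate (Theorem 2.1 of [AGP]) to each progression individually. Since $L=\prod_q q$ is squarefree, the Chinese Remainder Theorem tells us that a prime $p$ coprime to $L$ is a quadratic residue mod $L$ precisely when it is a quadratic residue mod $q$ for every $q\mid L$. For each such $q$ the quadratic residues form exactly $(q-1)/2$ of the residue classes mod $q$, so the set of residues mod $L$ that are simultaneously quadratic residues mod every $q\mid L$ consists of
\[\prod_{q\mid L}\frac{q-1}{2}=\frac{\phi(L)}{2^{\omega(L)}}\]
classes, each coprime to $L$. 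Combining each of these with the single class $a$ mod $M$ (using $(M,L)=1$) produces exactly $\phi(L)/2^{\omega(L)}$ residue classes $c$ mod $ML$, each coprime to $ML$, since $(a,M)=1$ forces $(c,M)=1$ and the quadratic-residue conditions force $(c,q)=1$ for every $q\mid L$. A prime $p\le z$ lies in $\pi(z,L,\underline{QR})\cap\pi(z,M,a)$ if and only if it falls into one of these classes.

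Next I would verify that the hypotheses of the density estimate hold with modulus $d=ML$ and $x$ as chosen. For the size condition, since $L\mid L'$ we have $ML\le ML'$, and $x=\lceil(ML')^{2/B}\rceil$ gives $x^{B}\ge (ML')^{2}\ge ML$ while $x^{B/2}\ge ML'\ge ML$; combined with $z\ge x^{1-B/2}$ this yields $z/x^{1-B}\ge x^{B/2}\ge ML$, so indeed $ML\le\min\{x^{B},z/x^{1-B}\}$. One must also confirm that $ML$ is not divisible by any element of $\mathcal D_B(x)$: by construction no prime factor of $L$ lies in $P_{B}(x)$, and the size bound on elements of $\mathcal D_B(x)$ together with the largeness of $y$ relative to $M$ rules out divisors of $M$. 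The point requiring genuine care is that $ML$ could a priori be divisible by an element $\delta\in\mathcal D_B(x)$ whose prime factors are split between the $M$-part and the $L$-part; I would rule this out by choosing, for each $\delta$, a prime of $P_{B}(x)$ not dividing $M$ whenever one exists, and observing that if $\mathrm{rad}(\delta)\mid M$ then $\delta\mid ML$ would force $\delta\mid M$, contradicting that no divisor of $M$ lies in $\mathcal D_B(x)$.

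With the hypotheses verified, I would apply the estimate $\pi(z,ML,c)\ge\pi(z)/(2\phi(ML))$ to each of the $\phi(L)/2^{\omega(L)}$ admissible classes and sum. Using $\phi(ML)=\phi(M)\phi(L)$ this gives
\[\pi(z,L,\underline{QR})\cap\pi(z,M,a)\ \ge\ \frac{\phi(L)}{2^{\omega(L)}}\cdot\frac{\pi(z)}{2\phi(M)\phi(L)}=\frac{\pi(z)}{2^{\omega(L)+1}\phi(M)}.\]
Finally, since $z\ge x^{1-B/2}$ is large, the bound $\pi(z)\ge z/\log z$ finishes the estimate and yields the claimed inequality.

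The main obstacle is the bookkeeping around the density theorem rather than any deep input: one must simultaneously keep the modulus $ML$ below both $x^{B}$ and $z/x^{1-B}$ and guarantee that $ML$ avoids the exceptional moduli $\mathcal D_B(x)$. The first is exactly why $x$ was defined as a power of $ML'$ and why the lemma is restricted to $z\ge x^{1-B/2}$; the second is precisely what the careful construction of $L$ (stripping out the primes of $P_{B}(x)$) was designed to handle, and it is the step most likely to hide an error if the interaction between the $M$-part and the $L$-part of a potential divisor is not checked.
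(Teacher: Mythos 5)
Your proposal is correct and follows essentially the same route as the paper: decompose the two conditions via the Chinese Remainder Theorem into $\prod_{q\mid L}\frac{q-1}{2}$ admissible residue classes modulo $ML$, apply Theorem 2.1 of [AGP] to each, and sum. You are in fact somewhat more careful than the paper in verifying the hypotheses of the density theorem (the bound $ML\le\min\{x^{B},z/x^{1-B}\}$ and the exclusion of exceptional moduli whose prime factors might be split between $M$ and $L$), and in making explicit the final passage from $\pi(z)$ to $z/\log z$.
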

\begin{proof} Clearly, the number of congruence classes mod $M$ is $\phi(M)$.  Moreover, if we work mod $L$, we note that the number of congruence classes that are quadratic residues modulo each prime $q|L$ is exactly $\frac{q-1}{2}$ of the $q-1$ classes which can contain prime numbers.  By Chinese Remainder Theorem, this means that the number of congruence classes mod $L$ that are quadratic residues is exactly $\prod_{q|L}(\frac{q-1}{2})$ of the $\prod_{q|L}q-1=\phi(L)$ congruence classes which yield a prime.

%the number of congruence classes mod $L\cdot m$ is $\leq m(\log\log\log x)^{\log\log\log x}$, which, for large $x$, is clearly less than $x^B$ for any positive, fixed $B$.  Since all of the prime factors of $L\cdot m$ are clearly less than $\log x$, it follows that

%
Now, since $ML\leq \min\{x^{B},z/x^{1-B}\}$ and $ML$ has no factors in $D_B$, we can apply Theorem 2.1 from [AGP] to congruence classes modulo $ML$.
In particular, we know that in the $\phi(M L)$ applicable congruence classes modulo $ML$, the number of primes in any one of these class will be
\[\geq \frac{\pi(z)}{2\phi(M L)}.\]
Since there are $\prod_{q|L}(\frac{q-1}{2})$ of these classes which would yield a quadratic residue mod $L$, we have that the number of primes that are $a$ mod $M$ and a quadratic residue mod $L$ are
\[\geq \frac{\pi(z)}{2^{\omega(L)+1}\phi(M)},\]
which is as required.\end{proof}

By analogy to the original paper of [AGP], for a given integer $d|L$ with $1\leq d\leq x^\frac B2$ for some fixed $B>0$, we wish to count the number of primes $p$ for which $p\equiv 1$ (mod $d$) and $((p-1)/d,L)=1$.  However, since we chose $x$ to be $\lceil(ML')^{\frac{2}{B}}\rceil$, every divisor $d$ of $L$ will be $\leq x^\frac B2$.  As such, we find the following:
\begin{lemma}\label{l512} Let $B<5/12$, and let $L$ be as above.  Then there exists a $k\leq x^{1-\frac B2}$ with $(k,L)=1$ such that
\begin{align*}
\#\{d|L &:p=dk+1\mbox{ }is\mbox{ }prime,\mbox{ }p\mbox{ }is\mbox{ }a\mbox{ }QR\mbox{ }mod\mbox{ }L,\mbox{ }p\equiv a\pmod M,\mbox{ }p\leq x\}\\
&\geq \frac{\left(\frac 32\right)^{\omega(L)}}{4\cdot \phi(M)\log x}.
\end{align*}
%\#\{d|L &:p=dk+1\mbox{ }is\mbox{ }prime,\mbox{ }p\mbox{ }is\mbox{ }a\mbox{ }QR\mbox{ }mod\mbox{ }L,\mbox{ }p\equiv a\pmod M,\mbox{ }p\leq x\}\\
%&\geq \frac{1}{4(2^{\omega(L)}\phi(M)\phi(d)\log x)}\#\{d|L:1\leq d\leq x^B\}.

\end{lemma}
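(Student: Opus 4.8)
The plan is to prove the lemma by an averaging argument over the admissible values of $k$. First I would define the double count
\[S=\sum_{\substack{k\leq x^{1-B/2}\\ (k,L)=1}}\#\{d\mid L: p=dk+1\text{ prime},\ p\text{ a QR mod }L,\ p\equiv a \pmod M,\ p\leq x\},\]
and observe that, since the number of admissible $k$ is at most $x^{1-B/2}$, there must exist a single $k$ for which the inner count is at least $S/x^{1-B/2}$. Thus it suffices to show $S\gg (3/2)^{\omega(L)}x^{1-B/2}/(\phi(M)\log x)$, and I would obtain this by interchanging the order of summation and estimating, for each fixed $d\mid L$, the quantity
\[N_d=\#\{k\leq x^{1-B/2}:(k,L)=1,\ p=dk+1\leq x\text{ prime, a QR mod }L,\ p\equiv a\pmod M\}.\]
Note that every $d\mid L$ satisfies $d\leq L\leq L'\leq x^{B/2}$, so that $p=dk+1\leq x$ holds essentially automatically, and $p\equiv 1\pmod d$ is built into the form of $p$.

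For fixed $d$, I would count $N_d$ by applying the density estimate of Theorem 2.1 of [AGP] (exactly as in the previous lemma) to the modulus $ML$. The point is to determine how many residue classes modulo $ML$ are admissible. The condition $p\equiv a\pmod M$ selects one class mod $M$. Modulo $L$, working prime-by-prime via CRT: for a prime $q\mid L$ with $q\nmid d$, the coprimality requirement $(k,L)=1$ forces $q\nmid (p-1)/d$, hence $p\not\equiv 1\pmod q$, so that $p$ must lie in one of the $\frac{q-1}{2}-1=\frac{q-3}{2}$ quadratic-residue classes other than $1$; for a prime $q\mid d$ the form of $p$ forces $p\equiv 1\pmod q$ (automatically a residue), with the mild additional requirement $q^2\nmid p-1$. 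Since $ML\leq\min\{x^B,z/x^{1-B}\}$ for $z=dx^{1-B/2}$ and $ML$ has no factor in $\mathcal D_B(x)$, the density theorem yields at least $\pi(z)/(2\phi(ML))$ primes in each admissible class, and therefore, up to the approximation $\pi(z)\approx z/\log z$,
\[N_d\gg\Big(\prod_{q\mid L/d}\tfrac{q-3}{2}\Big)\frac{dx^{1-B/2}}{2\phi(ML)\log x}.\]

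The heart of the argument is then the multiplicative identity obtained after summing over $d\mid L$. Because $L$ is squarefree, the sum factors over the primes $q\mid L$, each contributing either a factor $q$ (when $q\mid d$) or a factor $\frac{q-3}{2}$ (when $q\nmid d$):
\[\sum_{d\mid L}d\prod_{q\mid L/d}\frac{q-3}{2}=\prod_{q\mid L}\Big(q+\frac{q-3}{2}\Big)=\prod_{q\mid L}\frac{3(q-1)}{2}=\Big(\tfrac32\Big)^{\omega(L)}\phi(L).\]
Feeding this back and using $(M,L)=1$ so that $\phi(ML)=\phi(M)\phi(L)$, I would obtain
\[S\gg\frac{(3/2)^{\omega(L)}\phi(L)}{2\phi(ML)}\cdot\frac{x^{1-B/2}}{\log x}=\frac{(3/2)^{\omega(L)}}{2\phi(M)}\cdot\frac{x^{1-B/2}}{\log x}.\]
Dividing by the number of admissible $k$ (at most $x^{1-B/2}$) produces the claimed bound, with the denominator $4$ in place of $2$ providing the slack needed to absorb the routine corrections.

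The step I expect to be the main obstacle is the careful treatment of the coprimality condition $(k,L)=1$ inside the density count, together with checking that the density theorem genuinely applies for every $d\mid L$. Excluding the class $p\equiv 1\pmod q$ for $q\nmid d$ is what converts the naive $\frac{q-1}{2}$ into $\frac{q-3}{2}$, and it is precisely this that makes the Euler product collapse to the clean factor $(3/2)^{\omega(L)}$; the secondary requirement $q^2\nmid p-1$ for $q\mid d$ must be handled either by refining the modulus to $ML\prod_{q\mid d}q$ (still below $x^B$ since $ML\cdot d<x^B$) or by absorbing it as a lower-order loss. One must also verify the uniformity of the density estimate across the full range of $d$, where the constraint $ML\leq z/x^{1-B}=dx^{B/2}$ is tightest at $d=1$; this is exactly where the choice $x=\lceil(ML')^{2/B}\rceil$ guarantees $ML\leq ML'\leq x^{B/2}$. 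These bookkeeping issues, rather than any single hard estimate, are the crux, and the factor of $2$ discrepancy between the computed constant and the stated one confirms that the argument is designed to run with room to spare.
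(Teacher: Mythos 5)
Your proposal is correct in substance and shares the paper's skeleton --- count pairs $(p,d)$, pigeonhole over the at most $x^{1-\frac B2}$ admissible values of $k=(p-1)/d$, and evaluate a multiplicative sum over $d\mid L$ --- but it handles the one step with real content, the condition $(k,L)=1$, by a genuinely different route. The paper counts primes in all $\prod_{q\mid L/d}\frac{q-1}{2}$ admissible classes modulo $ML$ and then subtracts, for each prime $q\mid L$, an upper bound for the count with $q\mid (p-1)/d$ obtained from Montgomery--Vaughan's explicit Brun--Titchmarsh theorem, using $\sum_{q\mid L}\frac{1}{q-1}\le\frac{1}{64}$ to keep the loss to a factor of $2$; the identity it then evaluates is $\sum_{d\mid L}2^{\omega(d)}=3^{\omega(L)}$. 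You instead fold the condition into the main term for the primes $q\nmid d$ by deleting the class $1\bmod q$ from the quadratic residues, replacing $\frac{q-1}{2}$ by $\frac{q-3}{2}$ (harmless, since every $q\in\mathcal Q$ exceeds $y^{\theta}/\log y$), and your identity $\prod_{q\mid L}\bigl(q+\frac{q-3}{2}\bigr)=\bigl(\frac32\bigr)^{\omega(L)}\phi(L)$ is the same combinatorics in different packaging. What this buys is a lower-bound-only argument for those primes, with no appeal to Brun--Titchmarsh there; what it costs is the residual condition $q^{2}\nmid p-1$ for $q\mid d$, which you flag but do not execute. That loose end does close in either of the ways you indicate: refining the modulus to $MLd$ works, since $MLd\le x^{B}$, since $MLd\le z/x^{1-B}=dx^{\frac B2}$ because $ML\le x^{\frac B2}$, and since $MLd$ has the same prime support as $ML$ and so avoids $\mathcal D_B(x)$; the cost is only a factor $\prod_{q\mid d}(1-\frac1q)$, which is negligible by the same $\sum_{q\mid L}\frac{1}{q-1}\le\frac{1}{64}$ bound. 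Alternatively one runs the paper's Brun--Titchmarsh subtraction for the primes $q\mid d$ alone. Either way your factor-of-two slack between $\frac{1}{2\phi(M)}$ and the stated $\frac{1}{4\phi(M)}$ absorbs the correction, so the argument is sound.
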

\begin{proof}
Above, we proved that for $z\geq x^{1-\frac B2}$,
\[\pi(z,L,\underline{QR})\cap \pi(z,M,a)\geq \frac{z}{2(2^{\omega(L)})\phi(M)\log z}.\]
If we add the additional constraint that the prime $p$ also be 1 mod $d$ for a given $d|L$, we have
\[\pi(dx^{1-\frac B2},d,1)\cap \pi(dx^{1-\frac B2},L,\underline{QR})\cap \pi(dx^{1-\frac B2},M,a)\geq \frac{dx^{1-\frac B2}}{2\cdot 2^{\omega(L)-\omega(d)}\phi(M)\phi(d)\log x},\]
where the savings of $2^{\omega(d)}$ in the denominator comes from the fact that we no longer have to worry whether $p$ is a quadratic residue mod $d$, and hence the requirement that $p$ be a quadratic residue mod $L$ is satisfied if $p$ is a quadratic residue mod $\frac Ld$.

We must now determine how many of these primes satisfy the additional condition of $((p-1)/d,L)=1$.  We require the technical condition that $\sum_{q|L}\frac{1}{q-1}\leq \frac{1}{64}$; however, just as in [AGP], this is easily verified for the $L$ we have chosen.

Now, for any prime $q$ which divides $L$, we have (by Montgomery and Vaughan's explicit version of the Brun-Titchmarsh theorem [MV]) that
\begin{align*}
\pi(dx^{1-\frac B2},dq,1)&\cap \pi(dx^{1-\frac B2},L,\underline{QR})\cap \pi(dx^{1-\frac B2},M,a)\\
\leq & \frac{2dx^{1-\frac B2}}{2^{\omega(L)-\omega(d)-1}\phi(M)\phi(dq)\log (x^{1-\frac B2}/q)}\\
\leq & \frac{8}{(q-1)(1-\frac B2)}\frac{dx^{1-\frac B2}}{2^{\omega(L)-\omega(d)}\phi(M)\phi(d)\log x},
\end{align*}
since $q\leq x^{(1-\frac B2)/2}$ by construction.  Putting these together, we have that
\begin{align*}
\pi(dx^{1-\frac B2},d,1)&\cap \pi(dx^{1-\frac B2},L,\underline{QR})\cap \pi(dx^{1-\frac B2},M,a)\\
&-\sum_{q|L,\mbox{ }q\mbox{ }prime}\pi(dx^{1-\frac B2},dq,1)\cap \pi(dx^{1-\frac B2},L,\underline{QR})\cap \pi(dx^{1-\frac B2},M,a)\\
\geq &\frac{dx^{1-\frac B2}}{2\cdot 2^{\omega(L)-\omega(d)}\phi(M)\phi(d)\log x}-\sum_{q|L,\mbox{ }q\mbox{ }prime}\frac{8}{(q-1)(1-\frac B2)}\frac{dx^{1-\frac B2}}{2^{\omega(L)-\omega(d)}\phi(M)\phi(d)\log x}\\
\geq &\frac{x^{1-\frac B2}}{4\cdot 2^{\omega(L)-\omega(d)}\phi(M)\log x},
\end{align*}
where the final step is by the fact that $\sum_{q|L}\frac{1}{q-1}\leq \frac{1}{64}$.  This means that we have at least
\[\sum_{d|L}\frac{x^{1-\frac B2}}{4\cdot 2^{\omega(L)-\omega(d)}\phi(M)\log x}\]
pairs $(p,d)$ such $p$ is prime, $d|L$, $(p-1)/d=k$ is relatively prime to $L$, $p$ satisfies the required congruences ($a$ mod $M$ and \underline{QR} mod $L$), $p\leq dx^{1-\frac B2}$, and $d\leq x^\frac B2$.

Since the number of possible distinct values for $(p-1)/d$ is bounded by $x^{1-\frac B2}$, there must exist some value $k$ such that $(k,L)=1$ and $k$ has at least
\[\sum_{d|L}\frac{2^{\omega(d)}}{4\cdot 2^{\omega(L)}\phi(M)\log x}\]
representations as $(p-1)/d$ for $p,d$ as above.

The numerator can be evaluated by a standard combinatorial identity:
\[\sum_{d|L}2^{\omega(d)}=\sum_{i=0}^{\omega(L)}\left(\begin{array}{c} \omega(L) \\ i \end{array} \right) 2^{\omega(L)-i}=(2+1)^{\omega(L)}=3^{\omega(L)},\]
and hence
\[\sum_{d|L}\frac{2^{\omega(d)}}{4\cdot 2^{\omega(L)}\phi(M)\log x}=\frac{\left(\frac 32\right)^{\omega(L)}}{4\cdot \phi(M)\log x}.\label{eqn1}\]\end{proof}

Let $k_0$ be the $k$ found above for which there exist many primes of the form $dk+1$ for $d|L$. Let us define
\[\mathcal P=\{p\mbox{ }prime:p=dk_0+1\mbox{ }for\mbox{ }some\mbox{ }d|L,\mbox{ }p\mbox{ }is\mbox{ }a\mbox{ }QR\mbox{ }mod\mbox{ }L,\mbox{ }p\equiv a\pmod M,\mbox{ }p\leq x\}.\]
The above lemma then gives a lower bound for the size of $|\mathcal P|$.

\section{Size of $\lambda(G)$}
Now, let $L$ be as above.  As usual, for a group $G$, we define $\lambda(G)$ to be the largest order of an element in $G$; in our case, we have $G=(\mathbb Z/ML\mathbb Z)^\times$.  Since $\lambda(G)$ is free of prime factors of size greater than $y$ (by assumption on the types of primes that divide $L$), we know that if $q^{a_p}|\lambda(G)$ then  $q\leq y$ and $q^{a_q}\leq y^\theta$.  Thus, if we let $a_q$ be the largest power of $q$ such that $q^{a_q}\leq y^\theta$ then
\[\lambda(G)\leq \prod_{q\leq y}q^{a_q}\leq y^{\theta \pi(y)}\leq e^{2\theta y}.\]

In the next section, we will compare this quantity to the number of primes generated by our choice of $L$.

\section{Sizes of Other Important Quantities}
In this section, we make use of a theorem from Baker and Schmidt [BS, Proposition 1], although the exact formulation of it comes from [Ma].  To begin, for an abelian group $G$, $n(G)$ is defined to be the smallest number such that a collection of at least $n(G)$ elements must contain some subset whose product is the identity.  Based upon the work of van Emde Boas and Kruyswijk [EK] and Meshulam [Me], it is known that
\[n(G)\leq \lambda(G)\left(1+\frac{\log|G|}{\lambda(G)}\right).\]
With this notation, we now state the theorem.
\begin{theorem}\label{thm}
For any multiplicative abelian group $G$, write
\[s(G) = \lceil 5\lambda(G)^2 \Omega (\lambda(G)) \log(3\lambda(G)
\Omega (|G|))\rceil,\]
where $\Omega(|G|)$ indicates the number of prime divisors (up to multiplicity) of $|G|$.

Let $A$ be a sequence of length $n$ consisting of non-identity elements of $G$. Then there exists a non-trivial subgroup $H\subset G$ such that\\

(i) If $n\geq s(G)$, then, for every $h\in H$, $A\cap H$ has a subsequence whose product is $h$.

(ii) If $t$ is an integer such that $s(G) < t < n-n(G)$ then, for every $h \in H$, $A$ has at least $\left(\begin{array}{c} n-n(G)\\ t-n(G)\end{array}\right)/\left(\begin{array}{c} n\\n(G)\end{array}\right)$
distinct subsequences of length at most $t$ and at least
$t - n(G)$ whose product is $h$.
\end{theorem}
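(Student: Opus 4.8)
The plan is to read this as a structural statement about subsequence products in the finite abelian group $G$. Writing $\Sigma(B)$ for the set of all products of subsequences of a sequence $B$, the goal of part (i) is a nontrivial subgroup $H$ that is \emph{covered} by $A$, meaning $\Sigma(A\cap H)=H$, while part (ii) is a quantitative refinement counting bounded-length subsequences realizing each $h\in H$. The one external ingredient is the zero-sum constant $n(G)$, for which the excerpt already supplies $n(G)\le\lambda(G)\bigl(1+\log|G|/\lambda(G)\bigr)$; this is what lets me pull disjoint nonempty product-identity blocks out of $A$ essentially at will, which I will exploit in both parts.

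For part (i) I would build $H$ by incremental coset-extension. The governing principle is that if a subgroup $K$ is already covered and I can find inside $A$ enough terms whose images generate and cover a cyclic subgroup of $\langle K,a\rangle/K$, then $\langle K,a\rangle$ is again covered; in the simplest case a term $a\notin K$ with $a^2\in K$ doubles a covered subgroup, since then $\Sigma\bigl(T\cup\{a\}\bigr)=K\cup aK=\langle K,a\rangle$ for a covering set $T$. Starting from $K=\{1\}$, I only need to succeed once. Thus I must locate a prime $p\mid\lambda(G)$ together with a collection of terms of $A$ whose $p$-parts lie in, and have subset-products covering, a common cyclic $p$-group. Roughly $\lambda(G)$ terms are needed to cover such a group, there are only $\Omega(\lambda(G))$ prime components to distribute over, and a logarithmic factor absorbs the search; a pigeonhole over these components, with the surplus of product-identity blocks providing flexibility, forces such a configuration once $n\ge s(G)$, which is exactly where the shape $\lambda(G)^2\,\Omega(\lambda(G))\log(\cdots)$ of the threshold is consumed.

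For part (ii) I would upgrade (i) to a count by an averaging-and-extraction argument. Reserving a block of $n(G)$ terms as a correction pool, I can adjust the length of any representing subsequence in increments of at most $n(G)$, so that every $h\in H$ is realized by subsequences whose lengths fill the window $[t-n(G),t]$. Counting the admissible choices of a length-$(t-n(G))$ base from the $n-n(G)$ non-reserved terms gives $\binom{n-n(G)}{t-n(G)}$ candidates, and dividing by the $\binom{n}{n(G)}$ ways of splitting off the correcting block controls the overcounting, producing the stated lower bound.

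The main obstacle is the construction in part (i): guaranteeing a \emph{nontrivial} subgroup that is fully covered by the terms of $A$ lying inside it, rather than merely reachable using arbitrary terms of $A$. The naive stabilizer-of-$\Sigma(A)$ argument yields elements of $H$ as products of arbitrary terms, but fails the $A\cap H$ requirement, since a representation of $h\in H$ may use terms outside $H$; the refinement forces one to manufacture enough multiplicity inside a single small subgroup. Pinning the constants $\lambda(G)^2$, $\Omega(\lambda(G))$, and the logarithm — the last via the van Emde Boas--Kruyswijk/Meshulam bound on $n(G)$ — to the exact threshold $s(G)$ is the delicate technical heart, as is verifying that the base/correction splittings in (ii) yield genuinely distinct subsequences landing in the asserted length window.
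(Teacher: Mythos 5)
The paper does not actually prove this statement: its ``proof'' is the single line ``See Lemma 6 of [Ma]'', which in turn rests on Proposition 1 of Baker--Schmidt [BS]. So the relevant question is whether your sketch would stand on its own as a replacement for that citation, and it would not. In part (i), the entire difficulty of the theorem is concentrated in the sentence claiming that ``a pigeonhole over these components, with the surplus of product-identity blocks providing flexibility, forces such a configuration once $n\geq s(G)$.'' That is precisely the assertion to be proved, and nothing in your outline carries it out: you never exhibit the pigeonholing that locates roughly $\lambda(G)$ terms of $A$ whose projections to a single cyclic $p$-component cover it, never explain how the factor $\lambda(G)^2\Omega(\lambda(G))\log(3\lambda(G)\Omega(|G|))$ is consumed, and your illustrative coset-extension step handles only the index-$2$ case $a^2\in K$, whereas the general step requires covering a cyclic group of arbitrary prime-power order by subset products of terms of $A$ that actually lie in the relevant subgroup (the very obstacle you correctly flag at the end, but do not overcome). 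You explicitly defer ``the delicate technical heart,'' which means part (i) is a plan, not a proof.

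Part (ii) has the same character. Asserting that reserving $n(G)$ terms as a ``correction pool'' lets you fill the length window $[t-n(G),t]$, and that dividing by $\binom{n}{n(G)}$ ``controls the overcounting,'' does not establish the bound $\binom{n-n(G)}{t-n(G)}/\binom{n}{n(G)}$: one must show that each representing subsequence of length at most $t$ is counted at most $\binom{n}{n(G)}$ times over all choices of base and correcting block, and that the resulting subsequences are genuinely distinct --- the double-counting identity at the core of the Baker--Schmidt argument. Since you acknowledge that verifying distinctness and the length window is unresolved, the count is not justified. Given that the paper itself only cites this result, the honest options are either to cite [BS, Proposition 1] and [Ma, Lemma 6] as the paper does, or to reproduce the Baker--Schmidt argument in full; the present sketch does neither.
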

\begin{proof}
See Lemma 6 of [Ma].

\end{proof}
For the rest of the paper, we will use this result in the case of $G=(\mathbb Z/ML\mathbb Z)^\times$ and $A=\mathcal P$.  To do this, we must bound the quantities $n$ and $s$ in terms of $y$ and $\theta$:
\begin{lemma}\label{lemfour2}  For $s(G)$ and $n(G)$ as defined above, $G=(\mathbb Z/ML\mathbb Z)^\times$, and $y$ sufficiently large,
\[s(G)\leq e^{7\theta y},\]
\[n(G)\leq e^{3\theta y}.\]
\end{lemma}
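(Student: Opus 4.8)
The plan is to observe that both $n(G)$ and $s(G)$ are governed entirely by $\lambda(G)$, whose size we already control from Section 3 via $\lambda(G)\leq e^{2\theta y}$, while every other factor in their definitions is at most polynomial in $y$ and hence negligible against the exponential growth. The whole argument is then a matter of substituting the bound on $\lambda(G)$ and checking that the polynomial corrections are absorbed by the gap between the exponents $2\theta y$ (resp.\ $4\theta y$) and the target exponents $3\theta y$ (resp.\ $7\theta y$).

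First I would dispose of $n(G)$. Using the stated inequality $n(G)\leq \lambda(G)\bigl(1+\tfrac{\log|G|}{\lambda(G)}\bigr)=\lambda(G)+\log|G|$, it suffices to bound $\log|G|$. Since $|G|=\phi(ML)\leq ML$ and $L=\prod_{q\in\mathcal Q,\,q\notin P_B(x)}q$ is a product of at most $|\mathcal Q|\leq \pi(y^\theta)$ primes, each of size at most $y^\theta$, we get $\log|G|\leq \log M+\pi(y^\theta)\,\theta\log y$. By the prime number theorem $\pi(y^\theta)\,\theta\log y\sim y^\theta$, so $\log|G|\ll y^\theta$. Combining this with $\lambda(G)\leq e^{2\theta y}$ and noting $y^\theta=o(e^{2\theta y})$, I obtain $n(G)\leq 2e^{2\theta y}\leq e^{3\theta y}$ as soon as $y$ is large enough that $e^{\theta y}\geq 2$.

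For $s(G)$ the same principle applies, with dominant factor $\lambda(G)^2\leq e^{4\theta y}$. I claim the remaining factors $\Omega(\lambda(G))$ and $\log(3\lambda(G)\Omega(|G|))$ are each only polynomial in $y$. Indeed, for any integer $n$ one has $\Omega(n)\leq \log_2 n$, so $\Omega(\lambda(G))\leq \log_2(e^{2\theta y})=O(y)$ and $\Omega(|G|)\leq \log_2(ML)\ll y^\theta$. Feeding the latter into the logarithm gives $\log(3\lambda(G)\Omega(|G|))\leq \log 3+\log\lambda(G)+\log\Omega(|G|)=O(y)$, since $\log\lambda(G)\leq 2\theta y$ and $\log\Omega(|G|)=O(\log y)$. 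Hence $s(G)\ll y^2e^{4\theta y}$ (the ceiling in the definition contributes a harmless $+1$), and since $y^2=o(e^{3\theta y})$ this is at most $e^{7\theta y}$ for $y$ sufficiently large.

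I do not expect a serious obstacle: the exponential growth in both quantities comes solely from powers of $\lambda(G)$, and the arithmetic factors $\Omega(\cdot)$ and $\log|G|$ contribute only polynomial-in-$y$ corrections that the exponent gap easily swallows. The one point requiring mild care is the estimate $\log|G|\ll y^\theta$, where I invoke that $L$ is a product of $O(\pi(y^\theta))$ primes each below $y^\theta$ together with the prime number theorem; everything else is a direct substitution of the Section 3 bound followed by the observation that exponentials in $y$ dominate fixed powers of $y$.
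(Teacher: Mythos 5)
Your proof is correct and follows exactly the route the paper intends: the paper's own proof is the one-line remark that everything ``follows from $\lambda(G)\leq e^{2\theta y}$ and $M$ is a constant,'' and your argument simply supplies the omitted details (bounding $\log|G|\ll y^\theta$, $\Omega(\lambda(G))=O(y)$, and $\log(3\lambda(G)\Omega(|G|))=O(y)$, then absorbing these polynomial factors into the exponent gap). No discrepancies to report.
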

\begin{proof}
Follows from the fact that $\lambda(G)\leq e^{2\theta y}$ and $M$ is a constant.
\end{proof}
We require one more piece of information about the size of one of our variables before completing the proof of the main results.  Specifically, we would like to know the size of $L$ itself.  To this end, we have the following:
\begin{lemma}\label{lemfour}
For $y$ sufficiently large, there exist constants $0<\kappa_1<\kappa_2$ such that
\[e^{\kappa_1 y^\theta}\leq L\leq e^{\kappa_2 y^\theta}.\]
\end{lemma}
\begin{proof}
The upper bound of the product of the primes up to $x$ is well known to be less than $e^{1.02 x}$ (see [RS]).  We can thus choose $\kappa_2$ to be 1.02.

For the lower bound, define $E_\mathcal Q(n)$ to be 1 if $n\in \mathcal Q-D_B(x)$ and 0 otherwise.  Note that
\[\log L=\sum_{n=\frac{y^\theta}{\log y}}^{y^\theta}E_\mathcal Q(n)\log n\geq \left|\mathcal Q-D_B(x)\right|\log\left(\frac{y^\theta}{\log y}\right)\geq \frac{\gamma}{2}y^\theta.\]

\end{proof}
\section{Proof of Main Theorem}
\begin{theorem}
Let $\mathcal P$ be the set of primes defined at the end of Section 2, let $G=(\mathbb Z/ML\mathbb Z)^*$, and let $s(G)$ be as in Theorem \ref{thm}.  Then $|\mathcal P|>s(G)$.  If $H$ is the subgroup of $G$ guaranteed by Theorem \ref{thm} (applied using $\mathcal P$ as our $A$) then there exists an element $h\in H$ such that
\[h\equiv 1 \pmod L,\]
\[h\equiv a \pmod M.\]
Equivalently, there exists a subset of $\mathcal P$ whose product is a Carmichael number that is congruent to $a$ (mod $M$).
\end{theorem}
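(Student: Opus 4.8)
The plan is to prove the three assertions in turn: the size bound $|\mathcal P|>s(G)$, the existence of the target element $h\in H$, and the translation into a genuine Carmichael number. The middle step is where the quadratic-residue hypothesis does its real work, and it is the one I expect to be delicate.

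First I would verify $|\mathcal P|>s(G)$ by comparing the lower bound of Lemma~\ref{l512} with the upper bound of Lemma~\ref{lemfour2}. Lemma~\ref{l512} gives $|\mathcal P|\geq (3/2)^{\omega(L)}/(4\phi(M)\log x)$. Since $\omega(L)\geq|\mathcal Q|-D_B\gg y^\theta/\log y$ by Lemma~\ref{lemtwo}, the numerator is $\exp(\gg y^\theta/\log y)$, whereas $\log x\asymp\log(ML')\asymp y^\theta$ by Lemma~\ref{lemfour} grows only polynomially in $y$ and is negligible against the exponentially large numerator; hence $|\mathcal P|\geq\exp((1-o(1))\,c\,y^\theta/\log y)$ for some $c>0$. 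On the other hand $s(G)\leq e^{7\theta y}$. Because $\theta>1$ forces $y^\theta/\log y\gg y$, the left side dominates once $y$ is large, giving $|\mathcal P|>s(G)$.

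Next, write $G\cong(\mathbb Z/M\mathbb Z)^\times\times(\mathbb Z/L\mathbb Z)^\times$ by the Chinese Remainder Theorem, and let $h_0\in G$ be the element with $h_0\equiv 1\pmod L$ and $h_0\equiv a\pmod M$. If I can show $h_0\in H$, then Theorem~\ref{thm} produces a subsequence $p_1,\dots,p_\ell$ of $\mathcal P\cap H$ whose product is $h_0$, and I claim $n=\prod_i p_i$ is the desired Carmichael number. Indeed $n$ is squarefree, $n\equiv a\pmod M$, and $n\equiv 1\pmod L$; moreover each $p_i=d_ik_0+1$ with $d_i\mid L$ gives $n\equiv1\pmod{k_0}$, so from $(L,k_0)=1$ we get $n\equiv1\pmod{Lk_0}$ and hence $p_i-1=d_ik_0\mid Lk_0\mid n-1$. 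Korselt's Criterion then certifies that $n$ is a Carmichael number, provided $n$ is composite; I would guarantee compositeness (in fact length $\gg s(G)$) by invoking Theorem~\ref{thm}(ii) with a suitable $t$, which simultaneously yields the quantitative count behind the stated lower bound for $C_{M,a}(X)$.

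The heart of the matter, and the main obstacle, is showing $h_0\in H$. Here the quadratic-residue requirement is essential. Let $\pi_M,\pi_L$ be the two coordinate projections and let $S_L$ be the group of squares modulo $L$. Every $p\in\mathcal P$ satisfies $\pi_M(p)=a$ and $\pi_L(p)\in S_L$, so $\mathcal P\subseteq\langle a\rangle\times S_L=:\Gamma$, and since every element of $H$ is a product of elements of $\mathcal P$ we get $H\subseteq\Gamma$. The congruence $q\equiv-1\pmod{4\phi(M)}$ built into $\mathcal Q$ forces $\tfrac{q-1}{2}\equiv-1\pmod{\phi(M)}$ for each $q\mid L$, so $|S_L|=\prod_{q\mid L}\tfrac{q-1}{2}$ is coprime to $\phi(M)$, hence to $|\langle a\rangle|=\mathrm{ord}_M(a)$. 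As $\Gamma$ is then a direct product of two subgroups of coprime order, every subgroup of $\Gamma$ splits as the product of its intersections with the factors, so $H=H_A\times H_B$ with $H_A\leq\langle a\rangle$ and $H_B\leq S_L$. Finally, $H$ non-trivial gives $\mathcal P\cap H\neq\emptyset$ (a non-identity element of $H$ is a nonempty product of elements of $\mathcal P\cap H$), and any such $p$ has $M$-coordinate $a$, forcing $a\in H_A$; since $a$ generates $\langle a\rangle$ this yields $H_A=\langle a\rangle$, whence $h_0=(a,1)\in H_A\times H_B=H$, completing the argument.
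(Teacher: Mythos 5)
Your proposal is correct and follows essentially the same route as the paper: both arguments rest on the facts that the quadratic-residue condition confines the $L$-component of any $p\in\mathcal P\cap H$ to the group of squares, of order $\prod_{q\mid L}\frac{q-1}{2}$, and that $q\equiv-1\pmod{4\phi(M)}$ makes this order coprime to $\phi(M)$. Where you split $H$ as a direct product of coprime-order factors and project, the paper equivalently raises a prime $p_H\in\mathcal P\cap H$ to an exponent $r$ with $r\equiv0\pmod{\mathrm{ord}_L(p_H)}$ and $r\equiv1\pmod{\phi(M)}$; the resulting element $h_0\equiv(1\bmod L,\,a\bmod M)$ is the same.
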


\begin{proof}
The fact that $|\mathcal P|>s(G)$ is obvious from the computation of both of these quantities in Lemmas \ref{l512} and \ref{lemfour2}, respectively.  So $\mathcal P\cap H$ is non-empty.  Thus, let $p_H$ be a prime such that $p_H\in \mathcal P\cap H$.

By construction, $p_H$ is a quadratic residue mod $L$.  So $ord_L(p_H)|\prod_{q|L}\frac{\phi(q)}{2}$.  Since $L$ is the product of primes that are 3 mod 4, this implies that the order of $p_H$ is odd modulo $L$.  By assumption, $(\frac{\phi(q)}{2}, \phi(M))=1$ for every prime $q|L$, which means that there exists an integer $r$ such that
\[r\equiv 0 \pmod{ord_L(p_H)},\]
\[r\equiv 1 \pmod{\phi(M)}.\]
Since $p_H\in H$ and $H$ is a group, $p_H^r$ must also be in $H$. But
\[p_H^r\equiv 1 \pmod L,\]
\[p_H^r\equiv a \pmod M,\]
which gives us the required congruence conditions.

Now, by Theorem \ref{thm}, the above congruences imply that there exists a square-free $n$, composed of primes in $\mathcal P$, such that
\[n\equiv 1 \pmod L,\]
\[n\equiv a \pmod M.\]
Since $n$ is a product of primes that are all 1 modulo $k$, it follows that $n$, too, is 1 mod $k$.  So for any prime $p|n$, we know that $p-1|kL|n-1$.  Thus, by Korselt's Criterion, $n$ is a Carmichael number that is congruent to $a$ modulo $M$.
\end{proof}

Let us now use the second part of Theorem \ref{thm} to prove the asymptotic.  We must choose a $t$ such that $s(G)<t < n -n(G)$.  As such, we will let
\[t=\frac{\left(\frac 65\right)^{\omega(L)}}{60\cdot \phi(M)\log x}.\]
\begin{varthm1}
Let $C_{M,a}(X)$ be the number of Carmichael numbers up to $X$ that are congruent to $a$ mod $M$.  Then there exists a constant $K>0$ such that
\[C_{M,a}(X)\gg \left(X\right)^\frac{K}{(\log\log\log X)^2}.\]
\end{varthm1}
\begin{proof}
Clearly, $s(G)<t<|\mathcal P|-n(G)$.  So the number of products of at most $t$ primes in $|\mathcal P|$ whose product is 1 mod $L$ and $a$ mod $M$ is at least
\[\left(\begin{array}{c} |\mathcal P|-n(G)\\ t-n(G)\end{array}\right)/\left(\begin{array}{c} |\mathcal P|\\n(G)\end{array}\right).\]
Of course, we recall from Lemma \ref{lemfour2} that $n(G)$ is much smaller than both $\mathcal P$ and $t$.  So it is clear that
\[|\mathcal P|-n(G)\geq \frac 45|\mathcal P|,\label{eq1}\]
and
\[t-n(G)\geq \frac 23 t.\label{eq2}\]
We will also use the standard bound that
\[\left(\frac{u}{v}\right)^v\leq \left(\begin{array}{c} u\\v\end{array}\right)\leq \left(\frac{ue}{v}\right)^v.\label{eq3}\]
Putting these four statements together,
\begin{align*}
\left(\begin{array}{c} |\mathcal P|-n(G)\\ t-n(G)\end{array}\right)/& \left(\begin{array}{c} |\mathcal P|\\n(G)\end{array}\right)\\
\geq & \left(\begin{array}{c} |\mathcal P|-n(G)\\ t-n(G)\end{array}\right)/ \left(\begin{array}{c} |\mathcal P|\\ \frac t3\end{array}\right)\\
\geq &\left(\frac{\frac 45 |\mathcal P|}{t}\right)^{\frac 23 t}/\left(\frac{3e|\mathcal P|}{t}\right)^{\frac t3}\\
\geq &\left(\frac{16}{75e}\right)^{\frac 13 t}\left(\frac{|\mathcal P|}{t}\right)^{\frac 13 t}\\
\end{align*}
By Lemma \ref{l512} and the definition of $t$, we have
\begin{align*}
\left(\frac{16}{75e}\right)^{\frac 13 t}&\left(\frac{|\mathcal P|}{t}\right)^{\frac 13 t}\\
\geq &\left(\frac{16\cdot 15}{75e}\right)^{\frac 13 t}\left(\frac{\left(\frac 32\right)^{\omega(L)}}{\left(\frac 65\right)^{\omega(L)}}\right)^{\frac 13 t}\\
\geq &\left(\left(\frac 54\right)^{\omega(L)}\right)^{\frac 13 t}.\\
\end{align*}
Recall from Lemma \ref{lemtwo} that
\[\omega(L)\geq \gamma\frac{y^{\theta}}{\log y^\theta}.\]
Define
\[X=x^t.\]
Using the bound for $\omega(L)$ above and the definition of $t$, we have that for $y$ sufficiently large,
\[(\log\log\log X)^2\geq \log y^\theta.\]
Moreover, from Lemma \ref{lemfour}, we have
\[X\leq (e^{\frac{2\kappa_2}{B} y^\theta})^{2t}.\]
Note also that any Carmichael number that is the product of at most $t$ primes in $\mathcal P$ must be less than $X$.  So the number derived from the combinatorial functions above is a lower bound for $C_{M,a}(X)$.

As such, we have that, for sufficiently large values of $y$,
\begin{align*}
C_{M,a}(X)\geq &\left(\left(\frac 54\right)^{\frac 13\omega(L)}\right)^{t}\\
\geq & \left(\left(e^{\frac{\kappa_1 y^\theta}{\log y^\theta}}\right)^t\right)^{\frac{\gamma\log(1.2)}{3}}\\
\geq & \left(X\right)^\frac{\gamma\log(1.2)B\kappa_1}{12\kappa_2(\log\log\log X)^2},
\end{align*}
where the second line is again from Lemma \ref{lemfour}.  The theorem then follows.
\end{proof}

\section{Acknowledgements}
Thanks to Carl Pomerance and Steven J. Miller for the many helpful comments and suggestions.  Thanks also to the referee for helping to make this paper far more readable.

\end{document}